\tikzset{every loop/.style={min distance=10mm,looseness=10}}
\tikzset{every state/.style={minimum size=2mm}}
\newtheorem{theorem}{Theorem}
\newtheorem{remark}[theorem]{Remark}
\newtheorem{conjecture}{Conjecture}
\title{A note on semi-transitivity of Mycielski graphs}
\author{Sergey Kitaev\footnote{Department of Mathematics and Statistics, University of Strathclyde, 26 Richmond Street, Glasgow G1, 1XH, United Kingdom. 
{\bf Email:} sergey.kitaev@strath.ac.uk.}\ \ and Artem Pyatkin\footnote{Sobolev Institute of Mathematics, Koptyug ave, 4, Novosibirsk, 630090, Russia}\ \footnote{Novosibirsk State University, Pirogova str. 2, Novosibirsk, 630090, Russia. {\bf Email:} artem@math.nsc.ru.}}
\begin{document}
	\maketitle

\begin{abstract}
An orientation of a graph is semi-transitive if it contains no directed cycles and has no shortcuts. An undirected graph is semi-transitive if it can be oriented in a semi-transitive manner. The class of semi-transitive graphs includes several important graph classes. The Mycielski graph of an undirected graph is a larger graph constructed in a specific manner, which maintains the property of being triangle-free but increases the chromatic number.

In this note, we prove Hameed's conjecture, which states that the Mycielski graph of a graph $G$ is semi-transitive if and only if $G$ is a bipartite graph. Notably, our solution to the conjecture provides an alternative and shorter proof of the Hameed's result 
on a complete characterization of semi-transitive extended Mycielski graphs.	\\
	
	\noindent
	{\bf Keywords:} semi-transitive graph, semi-transitive orientation, word-representable graph, Mycielski graph, extended Mycielski graph
		\end{abstract}	

\section{Introduction}
{\em Semi-transitive graphs}, also known as {\em word-representable graphs}, 
include several fundamental classes of graphs (e.g.\ {\em circle graphs}, {\em $3$-colorable graphs} and {\em comparability graphs}), and they have being the subject of much research in the literature \cite{KL15}. In particular, Hameed~\cite{Hameed} provided a complete classification of semi-transitive {\em extended Mycielski graphs} and also {\em Mycielski graphs of comparability graphs}. Our goal in this note is to settle a conjecture of Hammed~\cite{Hameed}  by giving a complete classification of semi-transitive Mycielski graphs. 

\subsection{Main definitions}
 The {\em Mycielski graph} of an undirected graph is a larger graph that preserves the property of being triangle-free but enlarges the chromatic number. These graphs were introduced by Mycielski in 1955 (see \cite{M}) to prove the existence of  triangle-free graphs with arbitrarily large chromatic number. Since its introduction, Mycielski graphs have attracted considerable attention in the literature from various perspectives; see, e.g., \cite{B} and references therein.
 
Let the set of vertices in a graph $G$ be $\{1, 2, \ldots , n\}$. The $Mycielski$ graph $\mu(G)$ contains $G$ itself as a subgraph, together with $n+1$ additional vertices: a vertex $i'$ corresponding to each vertex $i$ of $G$, and an extra vertex $x$. Each vertex $i'$ is adjacent to $x$, forming a star subgraph $K_{1,n}$. Additionally, for each edge $ij$ in $G$, the $Mycielski$ graph includes two edges: $i'j$ and $ij'$. For instance, the graph $\mu(C_{5})$ also known as Gr\"otzsch graph is presented in Figure~\ref{Mycielski} to the left. Here and throughout the note, $C_n$ denotes the cycle graph on $n$ vertices labelled around the cycle $1,2,\ldots,n$.

\begin{figure}
\begin{tabular}{c c c}

\hspace{1cm}		
\begin{tikzpicture}[scale=0.5]
			
\draw (2,8) node [scale=0.5, circle, draw](node1){$1$};
\draw (4,8) node [scale=0.5, circle, draw](node2){$2$};
\draw (6,8) node [scale=0.5, circle, draw](node3){$3$};
\draw (8,8) node [scale=0.5, circle, draw](node4){$4$};
\draw (10,8) node [scale=0.5, circle, draw](node5){$5$};

\draw (2,5) node [scale=0.5, circle, draw](node6){$1'$};
\draw (4,5)node [scale=0.5, circle, draw](node7){$2'$};
\draw (6,5)node [scale=0.5, circle, draw](node8){$3'$};
\draw (8,5)node [scale=0.5, circle, draw](node9){$4'$};
\draw (10,5)node [scale=0.5, circle, draw](node10){$5'$};
\draw (6,3)node [scale=0.5, circle, draw](node11){$x$};

\draw (node1)--(node2) -- (node3) -- (node4) -- (node5);
\draw [bend right=20] (node5) to (node1);
\draw (node1)--(node7);
\draw (node2)--(node8);
			
\draw (node3)--(node9);
\draw (node4)--(node10);
\draw (node6)--(node2);
\draw (node7)--(node3);
\draw (node8)--(node4);
\draw (node9)--(node5);
\draw (node11)--(node6);
\draw (node11)--(node7);
\draw (node11)--(node8);
\draw (node11)--(node9);
\draw (node11)--(node10);
\draw (node1)--(node10);
\draw (node6)--(node5);
\end{tikzpicture}

&

&

	\begin{tikzpicture}[scale=0.5]
		
		\draw (2,8) node [scale=0.5, circle, draw](node1){$1$};
		\draw (4,8) node [scale=0.5, circle, draw](node2){$2$};
		\draw (6,8) node [scale=0.5, circle, draw](node3){$3$};
		\draw (8,8) node [scale=0.5, circle, draw](node4){$4$};		
		\draw (10,8) node [scale=0.5, circle, draw](node5){$5$};
		\draw (2,5) node [scale=0.5, circle, draw](node6){$1'$};
		\draw (4,5)node [scale=0.5, circle, draw](node7){$2'$};
		\draw (6,5)node [scale=0.5, circle, draw](node8){$3'$};
		\draw (8,5)node [scale=0.5, circle, draw](node9){$4'$};
		\draw (10,5)node [scale=0.5, circle, draw](node10){$5'$};
		\draw (6,3)node [scale=0.5, circle, draw](node11){$x$};				
		\draw (node1)--(node2) -- (node3) -- (node4) -- (node5);
		\draw [bend right=20] (node5) to (node1);		
		\draw (node1)--(node7);
		\draw (node1)--(node8);
		\draw (node1)--(node9);
		\draw (node1)--(node10);
		\draw (node2)--(node6);
		\draw (node2)--(node8);
		\draw (node2)--(node9);
		\draw (node2)--(node10);
		\draw (node3)--(node6);
		\draw (node3)--(node7);
		\draw (node3)--(node9);
		\draw (node3)--(node10);
		\draw (node4)--(node6);
		\draw (node4)--(node7);
		\draw (node4)--(node8);
		\draw (node4)--(node10);
		\draw (node5)--(node6);
		\draw (node5)--(node7);
		\draw (node5)--(node8);
		\draw (node5)--(node9);
		\draw (node5)--(node9);
		\draw (node6)--(node11);
		\draw (node7)--(node11);
		\draw (node8)--(node11);
		\draw (node9)--(node11);
		\draw (node10)--(node11);		
	\end{tikzpicture}
			
\end{tabular}
	
\caption{The graphs $\mu(C_5)$ (to the left) and $\mu'(C_5)$ (to the right)} \label{Mycielski}
	
\end{figure}
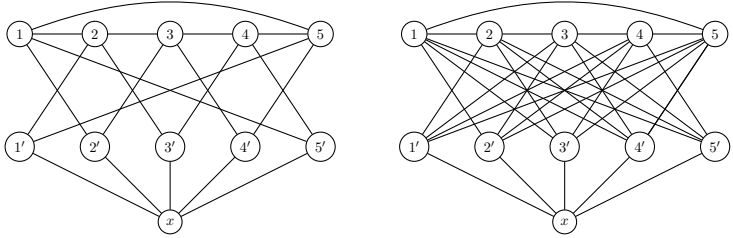

The {\em extended Mycielski graph} $\mu'(G)$ is derived from a Mycielski graph $\mu(G)$ by connecting every vertex $i'$ to every vertex $j$, except for $j=i$. The graph $\mu'(C_{5})$ is presented in Figure~\ref{Mycielski} to the right.  Note that for a complete graph $K_n$, $\mu'(K_n)=\mu(K_n)$. 


An orientation of a graph is {\em semi-transitive} if it is acyclic (there are no directed cycles), and for any directed path $v_0\rightarrow v_1\rightarrow \cdots \rightarrow v_k$ either there is no edge between $v_0$ and $v_k$, or $v_i\rightarrow v_j$ is an edge for all $0\leq i<j\leq k$. 
An induced subgraph on vertices $\{v_0,v_1,\ldots,v_k\}$ of an oriented graph is a {\em shortcut} if its orientation is acyclic and non-transitive, and there is the directed path $v_0\rightarrow v_1\rightarrow \cdots \rightarrow v_k$  and the edge $v_0\rightarrow v_k$ called the shortcutting edge. Note that any shortcut must contain at least 4 vertices. A semi-transitive orientation can then be alternatively defined as an acyclic shortcut-free orientation. A non-oriented graph is {\em semi-transitive} if it admits a semi-transitive orientation.

A {\em source} (resp., {\em sink}) in a directed graph is a vertex with ingoing (resp., outgoing) edges. The following result proven in \cite{KS} is quite helpful.

 \begin{theorem}[\cite{KS}]\label{source-thm} Suppose that a graph $G$ is semi-transitive, and $v$ is a vertex in $G$. Then there exists a semi-transitive orientation of $G$ where $v$ is a source (or a sink).   
\end{theorem}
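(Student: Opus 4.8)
The plan is to prove the source version; the sink version then follows for free, because reversing every edge of a semi-transitive orientation yields another semi-transitive orientation (acyclicity is obviously preserved, and a directed path with a shortcutting edge simply maps to a reversed directed path with a reversed shortcutting edge, so the absence of shortcuts is preserved too), and this global reversal turns any source into a sink. So I fix a semi-transitive graph $G$ and a vertex $v$, and aim to produce a semi-transitive orientation in which $v$ is a source. I would run an extremal argument: among all semi-transitive orientations of $G$, choose one, $D$, for which the in-degree $d^-(v)$ is as small as possible, and show that this minimum must be $0$.

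Suppose instead that $d^-(v)\ge 1$. Fix a topological order of the acyclic orientation $D$ and let $u$ be the in-neighbour of $v$ that is \emph{largest} in this order. Since the topological rank strictly increases along every directed edge, the vertex immediately preceding $v$ on any directed path from $u$ to $v$ would be an in-neighbour of $v$ ranked above $u$, which is impossible. Hence the edge $u\to v$ is the \emph{only} directed path from $u$ to $v$, so reversing it to $v\to u$ creates no directed cycle; call the resulting acyclic orientation $D_1$. By construction $d^-_{D_1}(v)=d^-_D(v)-1$.

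The catch is that $D_1$ need not be semi-transitive: a single reversal can introduce a shortcut, and this already happens for $K_4$ minus an edge, so the naive recipe ``reverse all in-edges at $v$'' fails in general. The core of the proof is therefore a repair step. Any shortcut of $D_1$ must use the new edge $v\to u$, since a shortcut avoiding it would already be present in the semi-transitive $D$. I would destroy each such shortcut by reversing an edge of its defining directed path that is \emph{not} incident to $v$; such an edge exists because a shortcut spans at least four vertices, so its path has length at least three and contains an edge away from $v$. Reversing that internal edge breaks the directed path, and hence the shortcut, while leaving every edge at $v$ untouched. Because the edges at $v$ are never moved during repairs, the in-degree of $v$ is frozen, so if the repairs can be completed we reach a semi-transitive $D^*$ with $d^-_{D^*}(v)=d^-_D(v)-1$, contradicting the minimality of $D$ and forcing $d^-_D(v)=0$.

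The hard part will be exactly this repair: showing that the internal reversals can be organised so that each one preserves acyclicity (reversing an internal edge is only safe when no alternative directed path closes a cycle, which need not hold automatically), creates no new shortcut, and that the whole process terminates. I expect the key structural input to be that a shortcut through $v\to u$ corresponds, back in $D$, to a directed path with a chord that the shortcut-freeness of $D$ constrains heavily; tracing these constraints should identify, for each offending shortcut, an internal edge whose reversal is cycle-safe and strictly decreases a suitable monovariant (for instance, a weighted count of shortcut-witnessing paths). Producing such a terminating, cycle- and shortcut-safe sequence of reversals away from $v$ is the crux of the argument.
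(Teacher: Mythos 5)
There is a genuine gap, and you have in fact named it yourself. The first half of your argument is fine: the reversal step is sound (choosing $u$ as the topologically largest in-neighbour of $v$ does guarantee that $u\to v$ is the unique directed $u$--$v$ path, so reversing it preserves acyclicity), and the reversal trick for converting the source version into the sink version is correct. But everything then hinges on the ``repair step,'' and that step is never carried out: you do not exhibit, for a shortcut through the new edge $v\to u$, an internal edge whose reversal is guaranteed to (i) keep the orientation acyclic, (ii) not create new shortcuts elsewhere (a repair reversal can itself produce fresh shortcuts, possibly again involving $v$'s unchanged edges, so the problem regenerates), and (iii) strictly decrease a well-defined monovariant ensuring termination. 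Saying that ``tracing these constraints should identify'' such an edge and that a ``weighted count of shortcut-witnessing paths'' should work is a research plan, not a proof; as it stands the extremal argument cannot be closed, because the contradiction requires a completed semi-transitive orientation $D^*$ with smaller in-degree at $v$, and you have not shown one exists. Local edge-reversal surgery on semi-transitive orientations is genuinely delicate --- your own $K_4$-minus-an-edge example already shows a single reversal can break semi-transitivity --- so the burden of proof sits precisely on the part you have deferred.

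For comparison: the paper does not prove this statement at all; it is quoted as a known result from the cited reference \cite{KS}, so there is no internal proof to measure your attempt against. The standard route in the literature is quite different from yours and avoids edge-by-edge surgery: it passes through the equivalence between semi-transitive orientations and word-representants (one takes a uniform word representing $G$, cyclically shifts it so that $v$ is the initial letter, and reads off the induced orientation, in which $v$ is automatically a source), or, in \cite{KS}, a direct graph-theoretic argument replacing this machinery. If you want to salvage your approach, the honest statement of what remains is: prove that whenever an acyclic orientation differs from a semi-transitive one by the reversal of a single edge $u\to v$ with $v\to u$ cycle-safe, the resulting shortcuts can be eliminated by a terminating sequence of reversals avoiding $v$; at present that claim is unsupported.
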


\subsection{Hameed's results and our contribution}

Hameed~\cite{Hameed} provided a complete classification of semi-transitive extended Mycielski graphs: 

\begin{theorem}[\cite{Hameed}]\label{Hameed-thm} The graph  $\mu'(G)$ is semi-transitive if and only if $G$ is a bipartite graph. \end{theorem}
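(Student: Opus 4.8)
The plan is to prove the two directions separately, the forward implication being short and the converse carrying all the work. For \emph{sufficiency}, suppose $G$ is bipartite with parts $A,B$. I would exhibit a proper $3$-colouring of $\mu'(G)$: colour every vertex of $A$ with $1$, every vertex of $B$ with $2$, every primed vertex $i'$ with $3$, and the apex $x$ with $1$. This is proper because the primed vertices form an independent set whose neighbours among the original vertices are coloured $1$ or $2$, while $x$ is adjacent only to primed vertices. Since $3$-colourable graphs are semi-transitive (as recalled in the Introduction), $\mu'(G)$ is semi-transitive.

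For the converse I would argue the contrapositive. If $G$ is not bipartite, take a shortest odd cycle $C=v_1v_2\cdots v_{2k+1}$; it is chordless, and the subgraph of $\mu'(G)$ induced by $\{x\}\cup V(C)\cup\{v_1',\dots,v_{2k+1}'\}$ is isomorphic to $\mu'(C_{2k+1})$. As induced subgraphs of semi-transitive graphs are semi-transitive, it suffices to show that $\mu'(C_{2k+1})$ is not semi-transitive. Assume for contradiction it is, and use Theorem~\ref{source-thm} to fix a semi-transitive orientation in which $x$ is a source. The first step is a \emph{type dichotomy}: for each cycle vertex $u$, all edges joining $u$ to primed vertices point the same way (all out, call $u$ \emph{out-type}; or all in, \emph{in-type}), for otherwise $i'\to u\to j'$ together with $x\to i'$ and $x\to j'$ is a shortcut on $\{x,i',u,j'\}$ (here $x\not\sim u$ and $i'\not\sim j'$).

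The core is to extract local constraints from small forbidden configurations and combine them into a parity contradiction, using as a fourth vertex either a third primed vertex or a cycle-neighbour: (i) no oriented cycle edge $v\to w$ has $v$ in-type and $w$ out-type, since then $v\to w\to t'\to v$ is a directed triangle for any third $t$; hence every cycle edge between an out-type and an in-type vertex runs from the former to the latter. (ii) No directed $2$-path $u\to m\to w$ on the cycle has $u,w$ of the same type: if both are out-type then $u\to m\to w\to j'$ with chord $u\to j'$ is a shortcut (for a spare primed $j'$), and the in-type case is symmetric. (iii) The two cycle-neighbours of any local source (resp.\ sink) have equal type, else a shortcut $s\to b\to s'\to a$ (resp.\ $a\to d'\to b\to d$) appears. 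Now partition the cycle into maximal monochromatic arcs. By (ii) the orientation inside each arc must alternate; by (i) the boundary edges of an out-type arc point outward and those of an in-type arc inward; and (iii) forbids an arc endpoint from being a source/sink, forcing the first and last internal edges, so that with the alternation every arc has an odd number of vertices. Out-type and in-type arcs alternate, so their number is even, whence $|C|$ is even, contradicting $|C|=2k+1$; the purely monochromatic case is excluded directly by (ii), since an odd cycle has no orientation free of a directed $2$-path.

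The main obstacle I anticipate is not any single step but the bookkeeping in the converse: for each forbidden pattern one must choose the fourth vertex so that the induced configuration is \emph{genuinely} a shortcut, i.e.\ has a missing diagonal edge, which is exactly where chordlessness of $C$ and the hypothesis $|C|\ge 5$ enter, and then verify that (i)--(iii) combine precisely into the parity count. The triangle $C_3$ must be handled separately, since there the two neighbours of a vertex are adjacent and no spare vertex exists; I would dispose of $\mu'(C_3)=\mu(K_3)$ by a short direct case analysis over the type assignments.
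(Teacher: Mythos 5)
Your proof is correct in substance, and it splits into two parts that compare differently with the paper. For the hard direction your argument is structurally the same as the paper's proof of Theorem~\ref{ext-Mys-odd-cycle-thm} (which the Remark notes applies verbatim to $\mu'(C_{2k+1})$): fix a semi-transitive orientation with $x$ a source via Theorem~\ref{source-thm}, derive the in/out type dichotomy from shortcuts through $x$, force the orientation of bichromatic cycle edges and the source/sink alternation inside monochromatic arcs, and finish with the same parity count (even number of alternating arcs, each of odd size, contradicting $|C|=2k+1$). The difference is in the local forbidden configurations: you exploit edges that exist only in $\mu'$ (a directed triangle $v\to w\to t'\to v$ through an arbitrary third primed vertex, and ``spare'' primed vertices for your (ii) and (iii)), whereas the paper's configurations live entirely inside $\mu(C_{2k+1})$. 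What the paper's choice buys is that the identical proof also settles $\mu(C_{2k+1})$ (the point of the note) and handles $k=1$ uniformly; your choice costs you the separate treatment of $\mu'(C_3)=\mu(K_3)$, which you correctly flag but leave as a deferred case analysis --- it does go through (after the type dichotomy, each type assignment on the triangle yields either a directed cycle or a $4$-vertex shortcut such as $1\to 2\to 1'\to 3$ with shortcutting edge $1\to 3$ and missing edge $1\,1'$), but as written this detail is owed. For sufficiency you genuinely diverge: the paper (for $\mu(G)$, and Hameed for $\mu'(G)$) constructs an explicit acyclic orientation whose longest directed path has length $2$, while you observe that for bipartite $G$ the graph $\mu'(G)$ is properly $3$-colourable (parts get colours $1,2$, primed vertices $3$, and $x$ gets $1$, which is legitimate since $x$ is adjacent only to primed vertices) and invoke the known fact that $3$-colourable graphs are semi-transitive; this is shorter but leans on that cited theorem rather than being self-contained.
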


Regarding Mycielski graphs, Hameed~\cite{Hameed} proved the following result and formulated the following conjecture:

\begin{theorem}[\cite{Hameed}]\label{comp-myc-thm} Let $G$ be a comparability graph. Then $\mu(G)$ is semi-transitive if and only if $G$ is bipartite. \end{theorem}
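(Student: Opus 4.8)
The plan is to prove the two implications separately, the forward (bipartite) direction being short and the converse carrying the real content. Throughout, recall that semi-transitivity is inherited by induced subgraphs (restrict the orientation), a fact I will use in the reduction.

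Suppose first that $G$ is bipartite. If $G$ is edgeless then $\mu(G)$ is a forest and we are done; otherwise $\chi(G)=2$, and since the Mycielski construction satisfies $\chi(\mu(G))=\chi(G)+1$ we get $\chi(\mu(G))=3$. As every $3$-colorable graph is semi-transitive (one of the classes recalled in the introduction), $\mu(G)$ is semi-transitive. Note that this direction uses neither comparability nor any special structure of $G$.

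For the converse I would argue by contraposition: assuming $G$ is a non-bipartite comparability graph, I show $\mu(G)$ is not semi-transitive. Here the comparability hypothesis is used exactly once, to produce a triangle. Indeed, comparability graphs are perfect, so $\omega(G)=\chi(G)\ge 3$ for a non-bipartite $G$, and hence $G$ contains a triangle on some vertices $\{a,b,c\}$. A direct check of adjacencies shows that the subgraph of $\mu(G)$ induced on $\{a,b,c,a',b',c',x\}$ is isomorphic to $\mu(K_3)$: the triangle survives, each $i'$ is joined precisely to the two triangle-vertices other than $i$ and to $x$, and $x$ is joined to no triangle-vertex. By heredity of semi-transitivity it therefore suffices to prove the key claim that $\mu(K_3)$ is \emph{not} semi-transitive.

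To prove this claim I would suppose, for contradiction, that $\mu(K_3)$ admits a semi-transitive orientation. Reversing all arcs preserves semi-transitivity, so by Theorem~\ref{source-thm} we may assume the apex $x$ is a source, and by relabelling we may assume the triangle is oriented $1\to 2\to 3$ with $1\to 3$. The decisive observation is that $x$ is adjacent only to primed vertices: hence any directed path $x\to i'\to\cdots\to j'$ that re-enters the primed set at some $j'\neq i'$ after passing through at least one triangle-vertex is automatically a shortcut, since $x\to j'$ is present while $x$ is non-adjacent to the intermediate triangle-vertices. I would then inspect the three induced ``diamonds'' $\{1,2,3,i'\}$, each a copy of $K_4$ with one edge deleted; a short analysis of each shows that an acyclic, shortcut-free orientation forces $1\to 2'$ and $2'\to 3$ in $\{1,2,3,2'\}$, and forces $1'$ and $3'$ to be a source or a sink of their respective diamonds. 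Feeding these into the path observation, $2'\to 3$ excludes $3\to 1'$ (else $x\to 2'\to 3\to 1'$ is a shortcut) and $1\to 2'$ excludes $3'\to 1$ (else $x\to 3'\to 1\to 2'$ is a shortcut), which pins $1'$ down as a source and $3'$ as a sink. But then $1'\to 2$ and $2\to 3'$ both hold, so $x\to 1'\to 2\to 3'$ is a shortcut through the triangle-vertex $2$, the desired contradiction.

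The main obstacle is this last step, the non-semi-transitivity of $\mu(K_3)$. The reduction to $\mu(K_3)$ is routine, but excluding every orientation cleanly depends on the right normalisation (Theorem~\ref{source-thm} to fix $x$ as a source and relabelling to fix the triangle) together with the restricted-neighbourhood observation that turns each re-entry into the primed set into a forbidden shortcut; the challenge is to organise this into a genuinely finite, transparent case analysis rather than an unstructured search over all orientations.
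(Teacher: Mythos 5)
Your proposal is correct, but it takes a genuinely different route from the paper. The paper itself only cites Hameed for this statement; its own machinery proves the stronger Theorem~\ref{main-thm} for arbitrary $G$, which requires showing $\mu(C_{2k+1})$ is non-semi-transitive for \emph{every} $k\ge 1$ (Theorem~\ref{ext-Mys-odd-cycle-thm}) via the source-at-$x$ normalisation, a red/blue colouring of the cycle vertices, and a parity argument on colour blocks; the bipartite direction is handled by an explicit orientation whose longest directed path has length~2. You instead exploit the comparability hypothesis: perfection gives $\omega(G)=\chi(G)\ge 3$ for non-bipartite $G$, hence a triangle, whose lift $\{a,b,c,a',b',c',x\}$ induces $\mu(K_3)$ in $\mu(G)$, and by heredity it suffices to rule out semi-transitive orientations of the single finite graph $\mu(K_3)$ --- your diamond analysis and the ``re-entry into the primed set'' shortcut observation (which is in effect the $k=1$ instance of the paper's property~(1)) do this correctly, as does the exclusion chain pinning $1'$ as a source and $3'$ as a sink. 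Your easy direction via $\chi(\mu(G))=\chi(G)+1\le 3$ and the semi-transitivity of $3$-colorable graphs is also sound, though it leans on two external results where the paper's explicit orientation is self-contained. What each approach buys: yours is a short, finite verification tailored to comparability graphs (the perfection step is exactly where it stops generalising, since non-bipartiteness alone yields only an induced odd cycle, not a triangle), while the paper's odd-cycle theorem is what settles the full conjecture for all graphs, of which this statement is the special case.
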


\begin{conjecture}[\cite{Hameed}]\label{conj} For every graph $G$ the graph $\mu(G)$ is semi-transitive if and only if $G$ is a bipartite graph. \end{conjecture}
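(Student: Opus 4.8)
The plan is to dispose of the ``if'' direction immediately and to concentrate all the work on the ``only if'' direction, which I reduce to a single family of graphs. For the ``if'' direction, note that every bipartite graph is a comparability graph: orienting each edge from one part to the other gives an orientation with no directed path of length two, hence a vacuously transitive one. So if $G$ is bipartite, Theorem~\ref{comp-myc-thm} already yields that $\mu(G)$ is semi-transitive, and nothing further is needed.

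For the ``only if'' direction I argue by contraposition: assuming $G$ is not bipartite, I show $\mu(G)$ is not semi-transitive. First I would record that semi-transitivity is hereditary under induced subgraphs, and that if $S\subseteq V(G)$ induces $H=G[S]$, then the induced subgraph of $\mu(G)$ on $S\cup S'\cup\{x\}$, where $S'=\{i':i\in S\}$, is exactly $\mu(H)$; this is immediate from the adjacency rules, since the primes form an independent set and $x$ sees only primes. Now take a shortest odd cycle of $G$; being shortest it is chordless, hence an induced $C_{2k+1}$, and therefore $\mu(C_{2k+1})$ is an induced subgraph of $\mu(G)$. It thus suffices to prove the key lemma: \emph{$\mu(C_{2k+1})$ is not semi-transitive} for every $k\ge 1$.

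The heart of the argument is this lemma, which I attack through the $4$-cycles of $\mu(C_n)$, $n=2k+1$. Suppose for contradiction that $\mu(C_n)$ has a semi-transitive orientation; by Theorem~\ref{source-thm} (reversing all edges if necessary, which preserves semi-transitivity) I may assume $x$ is a source, so $x\to i'$ for all $i$. The engine is the elementary fact that an acyclically oriented induced $4$-cycle is shortcut-free if and only if, traversing it cyclically, exactly two of its four edges agree with the traversal direction (equivalently, its unique source and sink are antipodal, or it has two sources and two sinks). I apply this to two families of induced $4$-cycles: the ``upper'' cycles $D^{(v)}=x-(v-1)'-v-(v+1)'-x$ for each cycle vertex $v$, and the ``lower'' cycles $C^{(i)}$ on $\{i-1,i,i+1,i'\}$ for each $i$ (both are genuinely induced once $n\ge 5$). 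Writing $L_v=1$ iff $(v-1)'\to v$, $R_v=1$ iff $v\to (v+1)'$, and $u_i=1$ iff $i\to i+1$ along the cycle, the shortcut-free condition on $D^{(v)}$ forces
\[
L_v+R_v=1 \qquad\text{for every } v,
\]
while the condition on $C^{(i)}$ forces the integer identity
\[
u_{i-1}+u_i=L_{i+1}+R_{i-1}.
\]
Eliminating $R$ via the first relation and summing the second over all $i\in\{1,\dots,n\}$, the $L$-terms telescope to zero and the $u$-terms collapse, yielding
\[
2\sum_{i=1}^{n} u_i = n .
\]
Since $n=2k+1$ is odd this is impossible, the desired contradiction. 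The remaining case $k=1$, i.e.\ $\mu(C_3)=\mu(K_3)$, falls outside this count because $C^{(i)}$ is no longer an induced $4$-cycle inside a triangle; it is handled directly (or by invoking Theorem~\ref{Hameed-thm}, since $\mu'(K_3)=\mu(K_3)$ and $K_3$ is not bipartite), completing the proof of Conjecture~\ref{conj}.

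I expect the main obstacle to be setting up the two $4$-cycle families correctly and extracting the constraints as \emph{exact integer equations} rather than mere congruences: the parity clash $2\sum u_i=n$ appears only because the shortcut-free condition pins each relevant $4$-cycle to have exactly two forward edges, and because the upper cycles become usable only after fixing $x$ as a source. Verifying that $D^{(v)}$ and $C^{(i)}$ are induced (needing $n\ge 5$ and the independence of the prime set) and correctly translating each edge orientation into the variables $L,R,u$ is the delicate bookkeeping on which the whole contradiction rests.
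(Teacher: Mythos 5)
Your proposal is correct, and while the global structure (reduce to an induced shortest odd cycle, prove the key lemma that $\mu(C_{2k+1})$ is non-semi-transitive) matches the paper, your proof of the key lemma is genuinely different. The paper also starts by making $x$ a source and deriving the same in/out dichotomy at each cycle vertex (your $L_v+R_v=1$), but then proceeds combinatorially: it colours vertices red/blue, proves that every maximal monochromatic run has odd length (using both $4$-vertex shortcuts and a $5$-vertex shortcut $x\,i'\,(i{+}1)\,i\,(i{+}1)'$, plus an alternating source/sink analysis inside long runs), and derives the parity contradiction from the alternation of an even number of odd-length patterns. You instead observe that every induced $4$-cycle in an acyclic shortcut-free orientation has exactly two forward edges, encode this as exact integer identities for the upper cycles $D^{(v)}$ and lower cycles $C^{(i)}$, and sum to get $2\sum u_i=n$ --- a purely arithmetic contradiction that avoids the run-length analysis and the $5$-vertex shortcut entirely. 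I verified your bookkeeping: with $L_v=[(v-1)'\to v]$, $R_v=[v\to(v+1)']$, the forward-edge count on $C^{(i)}$ is $u_{i-1}+u_i+(1-L_{i+1})+(1-R_{i-1})=2$, which is exactly your identity, and the telescoping works. Your approach buys brevity and mechanical verifiability at the cost of two extra dependencies on cited results: you invoke Theorem~\ref{comp-myc-thm} for the ``if'' direction (where the paper gives a two-line explicit orientation of $\mu(G)$ with longest directed path of length $2$, which is just as short and self-contained) and Theorem~\ref{Hameed-thm} for the $k=1$ case, which your counting genuinely misses since $C^{(i)}$ is not induced in $\mu(C_3)$; the paper's argument covers $k=1$ uniformly. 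Both of these are legitimate but slightly weaken the self-contained character of the note; if you handle $\mu(C_3)$ directly and keep the explicit bipartite orientation, your $4$-cycle-counting version is arguably the cleaner proof of the key lemma.
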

 
In this note, we prove Conjecture~\ref{conj} and consequently generalize Theorem~\ref{comp-myc-thm}. Note that the proof of Theorem~\ref{Hameed-thm} is based on proving that $\mu'(C_{2k+1})$ is non-semi-transitive for all $k \ge 1$.  Similarly, our proof of Conjecture~\ref{conj} relies on demonstrating the non-semi-transitivity of $\mu(C_{2k+1})$ for all $k\geq 1$, which was also conjectured in~\cite{Hameed}. Notably, our relatively concise proof can be directly applied to establish the non-semi-transitivity of $\mu'(C_{2k+1})$, which originally required a lengthier consideration involving four cases. 

\section{Classification of semi-transitive Mycielski graphs}\label{sec2}

The following theorem is crucial in proving our main result.

\begin{theorem}\label{ext-Mys-odd-cycle-thm}
The graph $\mu(C_{2k+1})$ is non-semi-transitive for all $k \ge 1$.
\end{theorem}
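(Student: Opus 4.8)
The plan is to assume, for contradiction, that $\mu(C_{2k+1})$ admits a semi-transitive orientation, and to derive a forced structure that ultimately produces either a directed cycle or a shortcut. By Theorem~\ref{source-thm}, I may assume without loss of generality that the apex vertex $x$ is a source; this immediately orients every edge $x\rightarrow i'$ for $1\le i\le 2k+1$. The combinatorial heart of the argument is the interaction between the outer cycle $C=C_{2k+1}$ on vertices $1,2,\ldots,2k+1$ and the ``shadow'' vertices $i'$, where each $i'$ is joined to the two cycle-neighbours $i-1$ and $i+1$ of $i$ (indices mod $2k+1$). I would first record the local constraints: for each $i$, the vertex $i'$ together with $i-1,\,i+1$ and the apex must be oriented so as to avoid short directed cycles and $4$-vertex shortcuts, and since $x\rightarrow i'$, the orientation of the two edges at $i'$ going into the cycle is tightly restricted.

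The main structural step is to analyze the orientation induced on the odd cycle $C$ itself. Since $C$ is an odd cycle, any acyclic orientation of it has at least one source and one sink along the cycle, and an odd cycle cannot be oriented so that the directions alternate perfectly; I would exploit a parity/counting argument to locate a vertex $i$ on the cycle whose two cycle-edges are oriented consistently (say both leaving, making $i$ a local source, or both entering, making $i$ a local sink on $C$). The crux is then to show that such a local source or sink on the cycle, when combined with the forced edges $i'\sim i-1$, $i'\sim i+1$ and $x\rightarrow i'$, creates a short directed path $v_0\rightarrow v_1\rightarrow v_2\rightarrow v_3$ whose endpoints are adjacent via a non-transitive shortcutting edge — typically a $4$-cycle on $\{i-1,\,i,\,i+1,\,i'\}$ or the triangle-free analogue involving $x$. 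I expect to close the argument by showing that \emph{every} consistent way of orienting the $i'$-edges around this bad vertex forces a shortcut on four or five vertices, independent of the global choices made elsewhere.

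The step I expect to be the main obstacle is establishing that the local configuration around a cycle source/sink is genuinely unavoidable and genuinely forces a shortcut, rather than merely a ``near-shortcut'' that could be repaired by a different global orientation. Because the Mycielski construction is triangle-free, the shortcuts here must have length at least $4$, so I cannot rely on the simplest transitivity violations; I will have to track directed $2$- and $3$-paths through the shadow vertices and the apex simultaneously. My strategy to overcome this is to reduce to a small, fully enumerable local gadget — the subgraph on $\{i-1,i,i+1,(i-1)',i',(i+1)',x\}$ — and to argue that the parity obstruction of the odd cycle guarantees that at least one such gadget is forced into an orientation that is acyclic but non-transitive, hence a shortcut. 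Once the bad gadget is exhibited, the contradiction with semi-transitivity is immediate, and since the argument nowhere uses the extra edges of $\mu'$, the same reasoning will also yield the non-semi-transitivity of $\mu'(C_{2k+1})$, recovering the key lemma behind Theorem~\ref{Hameed-thm}.
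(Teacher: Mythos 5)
There is a genuine gap at exactly the step you flag as the main obstacle, and it cannot be repaired in the purely local form you propose. First, the parity fact about odd cycles is the opposite of what you use: what an odd cycle forbids is a perfectly alternating orientation, so what parity \emph{guarantees} is a vertex with one incoming and one outgoing cycle-edge (a ``flow-through'' vertex $i-1\rightarrow i\rightarrow i+1$), not a local source or sink; local sources and sinks exist in every acyclic orientation of every cycle, even or odd, so they cannot carry the obstruction. Indeed, if a local source/sink on the cycle together with the apex and shadow vertices forced a shortcut, then $\mu(C_{2k})$ (and more generally $\mu(G)$ for bipartite $G$) would also be non-semi-transitive, contradicting the orientation used in the paper's Theorem~\ref{main-thm}, in which every cycle vertex is a source or a sink.

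Second, and more importantly, no single gadget on $\{i-1,i,i+1,(i-1)',i',(i+1)',x\}$ around a flow-through vertex is forced into a shortcut ``independent of the global choices made elsewhere.'' For example, with $i-1\rightarrow i\rightarrow i+1$ one can orient $(i-2)'\leftarrow (i-1)\rightarrow i'$, $(i-1)'\rightarrow i\leftarrow (i+1)'$ and $i'\rightarrow (i+1)\leftarrow (i+2)'$ with $x$ a source, and the gadget contains no directed cycle and no shortcut. Avoiding shortcuts around such a vertex only yields \emph{constraints} on how the cycle vertices relate to their shadow neighbours; this is precisely what the paper encodes by colouring each cycle vertex red or blue according to whether $(i-1)'\rightarrow i\leftarrow (i+1)'$ or $(i-1)'\leftarrow i\rightarrow (i+1)'$ (forced once $x$ is a source), and then proving three properties of this colouring: blue-to-red edges along the cycle, the colours forced at the ends of any directed $2$-path, and the evenness of the gap between two consecutive same-coloured vertices. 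The contradiction is then a \emph{global} parity count: every maximal monochromatic run has odd length, runs alternate in colour so there is an even number of them, hence the cycle would have an even number of vertices, contradicting oddness. Your plan is missing this colouring and the global counting step, and its replacement --- a locally forced shortcut --- is false, so the argument as outlined does not go through.
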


\begin{proof} 
Recall that the vertices of $C_{2k+1}$ are labelled $1,2,\ldots,2k+1$ around the cycle. Suppose $\mu(C_{2k+1})$ admits a semi-transitive orientation. Then, by Theorem~\ref{source-thm}, we can assume that the vertex $x$ is a source. Then, to avoid shortcuts involving $x$, for each vertex $i\in\{1,\ldots,2k+1\}$, either $(i-1)'\rightarrow i \leftarrow (i+1)'$ or  $(i-1)'\leftarrow i \rightarrow (i+1)'$, where $0'=(2k+1)'$ and $(2k+2)'=1'$. Colour a vertex $i$ red in the former case and blue in the latter case. 
Let us show the following properties of this colouring. 

\begin{itemize}
\item[(1)] If a blue vertex $b$ is adjacent to a red vertex $r$ in $C_{2k+1}$, then the edge is $b\rightarrow r$, i.e. it goes from $b$ to $r$. \\ \\ Indeed, if, say, $b=i$ and $r=(i+1)$ and $(i+1)\rightarrow i$, then $xi'(i+1)i(i+1)'$ is a shortcut contradicting the assumption.
\item[(2)] If $i\rightarrow (i+1)\rightarrow (i+2)$  in $C_{2k+1}$, then the vertex $i$ is blue and the vertex $(i+2)$ is red. \\ \\ Indeed, in any other case, the vertices in $\{i,(i+1),(i+2),(i+1)'\}$ form either a shortcut or a directed cycle. \\ 
Clearly, the similar property holds if  $(i+2)\rightarrow (i+1)\rightarrow i$  in $C_{2k+1}$.
\item[(3)] If vertices $i$ and $(i+m)$ in  $C_{2k+1}$ are of the same colour, and all $(i+1), \ldots, (i+m-1)$ are of the other colour, then $m$ is even. \\ \\
Indeed, let $i$ and $i+m$ be red, and all $(i+1), \ldots, (i+m-1)$ be blue (the other case is considered similarly). If $m=2$ the property holds immediately; so, assume $m\ge 3$. From property (1), we have edges $(i+1) \rightarrow i$ and $(i+m-1) \rightarrow (i+m)$. Since there are edges $(i+2)  \rightarrow (i+1)'\rightarrow i$, we must have the edge $(i+2) \rightarrow (i+1)$ to avoid the shortcut $(i+1)(i+2)(i+1)'i$. Similarly, we must have the edge $(i+m-2) \rightarrow (i+m-1)$. From property (2), each vertex among the vertices $(i+2),\ldots, (i+m-2)$
must be either a source or a sink  in $C_{2k+1}$. In particular, $(i+2)$ is a source. Since sources and sinks must alternate in the cycle, all vertices with even addends $(i+2), (i+4), \ldots$ must be sources, and those with odd addends $(i+3), (i+5), \ldots$ must be sinks. But since $(i+m-2)$ cannot be a sink, $m$ must be even.
\end{itemize}
Note that property (3) holds even if $m=2k+1$, i.e.\ if $i$ and $(i+m)$ coincide.  Let us call each maximal sequence of the same coloured vertices in $C_{2k+1}$ a ``pattern''.  Note that in any orientation of $C_{2k+1}$ there must be a vertex that is neither a source nor a sink.
So, by property~(2), there are at least two patterns. 
It follows from property (3) that each pattern contains an odd number of vertices.  Since the colours of the patterns alternate, the total number of the patterns in the cycle must be even. But then the total number of vertices in the cycle is also even as the sum of an even number of odd addends -- a contradiction, because the cycle $C_{2k+1}$ is odd. Therefore, no semi-transitive orientation of  $\mu(C_{2k+1})$ can exist.
\end{proof}

\begin{remark} Our proof of Theorem~\ref{ext-Mys-odd-cycle-thm} applies without modification if $\mu(C_{2k+1})$ is replaced by $\mu'(C_{2k+1})$, thereby offering an alternative, shorter proof of the respective result in \cite{Hameed}.\end{remark}

Indeed, it is easy to verify that all shortcuts mentioned in the proof of Theorem~\ref{ext-Mys-odd-cycle-thm} remain shortcuts also in the graph $\mu'(C_{2k+1})$.

The following theorem confirms Conjecture~\ref{conj}, and its proof follows the steps presented in the proof of Theorem~4 in \cite{Hameed}. 

\begin{theorem}\label{main-thm} The graph  $\mu(G)$ is semi-transitive if and only if $G$ is a bipartite graph. \end{theorem}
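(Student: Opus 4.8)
The plan is to prove the two implications separately, with the substantive direction being necessity (``only if''), which reduces to Theorem~\ref{ext-Mys-odd-cycle-thm} through a hereditary argument, while sufficiency (``if'') follows almost immediately from the chromatic behaviour of the Mycielski construction.

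First I would establish necessity by contraposition: assuming $G$ is not bipartite, I would show $\mu(G)$ is not semi-transitive. Since $G$ is non-bipartite it contains an odd cycle, and a \emph{shortest} odd cycle $C$ in $G$ must be chordless (a chord would split it into two shorter cycles of opposite parities, so the odd one would contradict minimality), hence induced. Writing $S$ for the vertex set of this induced copy of $C_{2k+1}$, I would then verify that the subgraph of $\mu(G)$ induced on $S\cup\{i' : i\in S\}\cup\{x\}$ is precisely $\mu(C_{2k+1})$: the unprimed vertices of $S$ induce $C$; a primed vertex $i'$ is adjacent to an unprimed vertex $j$ exactly when $ij$ is an edge of $C$; the primed vertices are pairwise non-adjacent; and $x$ is adjacent to every primed vertex and to no unprimed vertex. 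Because semi-transitivity is preserved under taking induced subgraphs (the restriction of a semi-transitive orientation remains acyclic and shortcut-free), a semi-transitive orientation of $\mu(G)$ would restrict to one of $\mu(C_{2k+1})$, contradicting Theorem~\ref{ext-Mys-odd-cycle-thm}. Hence $\mu(G)$ is not semi-transitive.

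For sufficiency, suppose $G$ is bipartite, so $\chi(G)\le 2$. The defining property of the Mycielski construction is that $\chi(\mu(G))=\chi(G)+1$~\cite{M}, whence $\chi(\mu(G))\le 3$. Since every $3$-colourable graph is semi-transitive (word-representable)~\cite{KL15}, it follows that $\mu(G)$ is semi-transitive. The edgeless case is subsumed, as then $\chi(G)=1$ and $\mu(G)$ is a star together with isolated vertices, which is clearly semi-transitive.

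I expect the only genuine subtlety to lie in the induced-subgraph identification in the necessity direction; once the shortest odd cycle is seen to be chordless, checking that $\mu(C_{2k+1})$ occurs as an induced subgraph of $\mu(G)$ is a routine inspection of the Mycielski adjacencies. The sufficiency presents no real obstacle, resting entirely on the chromatic identity $\chi(\mu(G))=\chi(G)+1$ and the inclusion of $3$-colourable graphs in the class of semi-transitive graphs.
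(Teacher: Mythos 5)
Your proof is correct, and the necessity direction coincides with the paper's: take a shortest (hence chordless, hence induced) odd cycle $C_{2k+1}$ in $G$, observe that $\mu(C_{2k+1})$ sits inside $\mu(G)$ as an induced subgraph, and invoke Theorem~\ref{ext-Mys-odd-cycle-thm} together with the fact that semi-transitivity is hereditary. Where you genuinely diverge is the sufficiency direction. The paper gives an explicit, self-contained orientation: orient the bipartite $G$ from one part to the other, make $x$ a source, and orient all edges $z\rightarrow y'$; the longest directed path then has length $2$, so acyclicity and the absence of shortcuts are immediate, and one even gets a concrete semi-transitive orientation of $\mu(G)$ in hand. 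You instead note that $\chi(\mu(G))\le\chi(G)+1\le 3$ for bipartite $G$ and appeal to the known theorem that every $3$-colourable graph is semi-transitive (a fact the paper itself cites in the introduction via \cite{KL15}). Your route is shorter on the page, but it outsources the work to a substantially nontrivial black-box result whose proof is far more involved than the two-line explicit orientation; the paper's construction keeps the argument elementary and self-contained, at the cost of a short verification. Both arguments are valid; just be aware that the chromatic identity you quote in the form $\chi(\mu(G))=\chi(G)+1$ needs $G$ nonempty (and only the inequality $\chi(\mu(G))\le\chi(G)+1$ is actually used), and that your edgeless remark is indeed subsumed by the general case.
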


\begin{proof} 
Suppose that $G$ is {\em not} a bipartite graph. Then $G$ must contain an odd cycle. A minimal odd cycle in $G$ is an induced odd cycle $C_{2k+1}$ for some $k\geq 1$. Therefore, $\mu(G)$ contains $\mu(C_{2k+1})$ as an induced subgraph. By Theorem~\ref{ext-Mys-odd-cycle-thm},  $\mu(G)$ is not semi-transitive.

Now suppose $G$ is a bipartite graph on $n$ vertices. Orient $G$ transitively from one part to the other, ensuring  the longest directed path in such an orientation is of length 1. Extend this transitive orientation of $G$ to a semi-transitive orientation of $\mu(G)$ by letting $x$ be a source and orienting edges $z\rightarrow y'$ for all $z,y\in\{1,2,\ldots,n\}$. Clearly, this orientation  is acyclic. Since the longest directed path in such an orientation has length 2 there can be no shortcuts. Hence,  $\mu(G)$ is semi-transitive in this case. \end{proof}

\section*{Acknowledgments} The work of the second author was supported by the state contract of the Sobolev Institute of Mathematics (project FWNF-2022-0019).


\begin{thebibliography}{20}
	
\bibitem{B} E.Z. Bidine, T. Gadi and M. Kchikech. Independence number and packing coloring of generalized Mycielski graphs. 
{\em Discuss. Math. Graph Theory} {\bf 41} (2021), no. 3, 725--747. 
		
\bibitem{Hameed} H. Hameed. On semi-transitivity of (extended) Mycielski graphs, {\em Discr. Appl. Math.} {\bf 359} (2024) 83--88.
				
\bibitem{KL15} S. Kitaev and V. Lozin. Words and Graphs, {\em Springer}, 2015.

\bibitem{KS} S. Kitaev and H. Sun. Human-verifiable proofs in the theory of word-representable graphs, {\em RAIRO -- Theoretical Informatics and Appl.} {\bf 58} (2024), Special Issue: Randomness and Combinatorics - Edited by Luca Ferrari \& Paolo Massazza, Art. 10, 10pp..
		
\bibitem{M} J. Mycielski. Sur le coloriage des graphes, {\em Colloq. Math.} {\bf 3} (1955), 161--162.
		
		
	\end{thebibliography}
\end{document}